\def\underset#1#2{{\mathrel{\mathop {{}_{} {#2}}\limits_{{#1}_{}}}}}
\def\upplim_#1{\underset{#1}{\overline\lim}\;}
\def\lowlim_#1{\underset{#1}{\underline\lim}\;}
\newtheorem{corollary}[equation]{Corollary}
\newtheorem{definition}[equation]{Definition}
\newtheorem{claim}[equation]{\indent{\it Claim}\rm }
\newtheorem{lemma}[equation]{Lemma}
\newtheorem{remark}[equation]{\indent\rm {\it Remark}}
\newtheorem{theorem}[equation]{Theorem}
\newcommand{\N}{\mathbb{N}}
\renewcommand{\P}{{\mathbb{P}}}
\numberwithin{equation}{section}
\title[The solutions of a sequence of decomposable form inequalities]{Finiteness criteria for the solutions of a sequence of decomposable form inequalities} 
\author{Si Duc Quang}
\begin{document}

\maketitle 

\begin{abstract}
In this paper, we give a finiteness criterion for the solutions of the sequence of semi-$q$-decomposable form equations and inequalities, where the semi-$q$-decomposable form is factorized into a family of $q$ nonconstant homogeneous polynomials with the distributive constant not exceeding a certain number.
\end{abstract}

\def\thefootnote{\empty}
\footnotetext{
2010 Mathematics Subject Classification:
Primary 11J68; Secondary 11J25, 11J97.\\
\hskip8pt Key words and phrases: Diophantine approximation; subspace theorem; homogeneous polynomial.}

\section{Introduction}
Let $k$ be a number field and let $M_k$ be the set of places. Let $S$ be a subset of $M_k$ which contains all archimedean places of $k$. Denote by $\mathcal O_S$ and $\mathcal O_{S}^*$ the set of all $S$-integers and the set of all $S$-units, respectively. Denote by $H_S({\bf x})$ the $S$-height of the point ${\bf x}\in k^{m+1}$ (see Section 2 for detailed definitions).

Let $F(x_0,\ldots,x_m)$ be a decomposable form of degree $q$ which can be factorized into linear factors over a finite extension $k'$ of $k$. An interesting problem in Diophantine approximation is to consider the solutions of the following decomposable form inequations (see \cite{EV1,Sch77,Sch73,Sch80} for reference):
$$0<\prod_{v \in S}\|F(x_0, \ldots, x_m)\|_{v} \leqslant c H_{S}^{\lambda}(x_0, \ldots, x_m) \quad \text { in } \quad (x_0, \ldots, x_m) \in \mathcal{O}_{S}^{m+1},$$
where $S$ is a finite set of places of $k$ containing the archimedean places of $k$, $c$ and $\lambda$ are two given positive real numbers.
If $(x_0, \ldots, x_m)$ is a solution of the above inequality, then so is $(\eta x_0, \ldots, \eta x_m)$ for every $\eta \in \mathcal{O}_{S}^{*}$. The solution $(\eta x_0, \ldots, \eta x_m)$ is said to be $\mathcal{O}_{S}^{*}$-proportional to $(x_0, \ldots, x_m)$.

In \cite{GR}, Gy\"{o}ry and Ru proved the following result for a sequence of decomposable form inequalities.

\vskip0.2cm
\noindent
{\bf Theorem A} (see \cite[Theorem 2]{GR}). {\it Let $q, m$ be positive integers. Let $c, \lambda$ be real numbers with $c>0, \lambda<q-2 m$, and  $k'$ a finite extension of $k$. For $n=1,2, \ldots$, let $F_{n}({\bf x})=F_{n}(x_{0}, \ldots, x_{m}) \in \mathcal{O}_{S}[{\bf x}]$ denote a decomposable form of degree $q$ which can be factorized into $q$ linear factors over $k'$, and suppose that these factors are in general position for each $n$, which means if we consider the hyperplanes in $\mathbb{P}^{m}$ defined by these linear factors, they are in general position in $\mathbb{P}^{m}$. Then there does not exist an infinite sequence of $\mathcal{O}_{S}^{*}$-nonproportional ${\bf x}_{n} \in \mathcal{O}_{S}^{m+1}, n=1,2, \ldots$, for which
\begin{align*}
0<\prod_{v \in S}\|F_{n}({\bf x}_{n})\|_{v} \leqslant c H_{S}^{\lambda}({\bf x}_{n})\ 
\text{ and }\ h(F_{n})=o(h({\bf x}_{n})) \text { as } n\rightarrow \infty. 
\end{align*}}

In 2020, Ji, Yan and Yu generalized the above result of Gy\"{o}ry and Ru by considering the case where each form $F_n$ is factorized into a product of homogeneous polynomials $Q_{1,n},\ldots,Q_{q,n}$ (see \cite[Theorem 1.2] {JYY}). To prove this result, they used the method of Yan and Yu in the paper \cite{YY}, which involves constructing the Corvaja-Zannier filtration for the vector space of homogeneous polynomials vanishing on the image of the entire curves, in establishing the second main theorem for families of moving hypersurfaces in subgeneral position and nonconstant entire curves (see \cite[Main Theorem]{YY}).

On the other hand, in 2022, by introducing the notion of ``distributive constant'' of an arbitrary family of hypersurfaces with respect to a projective variety the author gave a general Schmidt's subspace theorem for fixed targets (see \cite[Theorem 1.5]{Qpcf}). Later on, in a very recent work \cite{CQT}, Cao, Thin and the author have given the notion of ``distributive constant'' for families of moving hypersurfaces indexed by an infinite index set and generalized our result in \cite{Qpcf} to the case of sequence of targets (see \cite[Theorem 1.6]{CQT}). 

Motivated by these works, in this paper we will use the notion of ``distributive constant'' to improve and generalize Theorem A of  Gy\"{o}ry and Ru and \cite[Theorem 1.2] {JYY} of Ji-Yan-Yu to the case where each $F_n$ is factorized into a product of homogeneous polynomials $Q_{1,n},\ldots,Q_{q,n}$ and the family of  moving hypersurfaces $\mathcal Q=\{Q_{1,n},\ldots,Q_{q,n}\}$ indexed by the set $\mathbb N$ of positive integers with a certain distributive constant. In order to state our result, we recall the following.

Let $\Lambda$ be an infinite index set. Let $\N=\{0,1,2,\ldots\}$ and for a positive integer $d$, we set
$$\mathcal T_d :=\{(i_0,\ldots, i_m)\in\N^{m+1}\ :\ i_0+\cdots +i_m=d\}.$$
We define that a moving hypersurface $Q$ in $\P^n(k)$ of degree $d,$ indexed by $\Lambda$ is a collection of homogeneous polynomials $\{Q(\alpha)\}_{\alpha\in\Lambda}$ of the same degree $d$.
Then, we can write 
$$Q=\sum_{I\in\mathcal T_d}a_I x^I,$$ 
where ${\bf x}^I = x^{i_0}_0\cdots x^{i_m}_m$ for ${\bf x}=(x_0,\ldots, x_m)$ and $I = (i_0,\ldots,i_m)$, and the $a_I$'s are maps (or functions) from $\Lambda$ to $k$ such that for every $\alpha\in\Lambda$, at least one $a_I(\alpha)$ is non-zero. We note that the support $Q(\alpha)^*=\{(x_0:\cdots:x_n)\in \P^n(\bar k)\ | \sum_{I\in\mathcal T_d}a_I(\alpha)x^I=0\}$ of $Q(\alpha)$ is a fixed hypersurface in $\P^n(\bar k)$ for every $\alpha\in\Lambda$.

\vskip0.2cm
\noindent
{\bf Definition B} (see \cite[Definition 1.5]{CQT}). {\it Let $V$ be a subvariety of $\mathbb P^n(k)$ of dimension $m$ and $\mathcal Q=\{Q_{1}, \ldots, Q_{q}\}$ be a family of $q$ moving hypersurfaces of $\mathbb P^n(k)$ indexed by an infinite index set $\Lambda$. Assume that $V$ is not contained in $Q_j(\alpha)^*\; ( j=1,\dots, q)$ for all, but finitely many $\alpha\in\Lambda$. We define the distributive constant of $\mathcal Q$ with respect to $V$ by the smallest number $\Delta_{\mathcal Q,V}$ such that there exists an infinite index subset $A$ of $\Lambda$ with
$$\Delta_{\mathcal Q,V}=\max_{\emptyset\ne\Gamma\subset \{1,\dots, q\},\alpha\in A}\dfrac{\#\Gamma}{m-\dim(\bigcap_{j\in \Gamma}Q_j(\alpha)^*\cap V(\overline k))}.$$}

\vskip0.1cm
Here we note that $\dim\emptyset =-\infty$, by $\sharp\Gamma$ we denote the number of elements of the set $\Gamma$. 
From the definition it follows that $\Delta_{\mathcal Q,V}\ge 1$. Moreover, if $Q_1,\ldots,Q_q\ (q\geqslant \ell+1)$ are in $\ell-$subgeneral position with respect to $V$ then $\Delta_{\mathcal Q,V}\le\ell-m+1$ (see \cite{CQT}). In the case $V=\mathbb P^n(k)$, we just call $\Delta_{\mathcal Q,\mathbb P^n(k)}$ the distributive constant of the family $\mathcal Q$.

Now, let $q$ be a positive integer and $F\left(x_0, \ldots, x_m\right)$ a homogeneous polynomial in $k\left[x_0, \ldots, x_m\right]\ (m \geq 1)$. We say that $F$ is a semi-$q$-decomposable form if $F$ can be factorized into a product of $q$ nonconstant homogeneous polynomials $Q_1, \ldots, Q_q$ over $\bar{k}$.
Our main result is stated as follows.

\begin{theorem}\label{1.1}
Let $\ell,m,q$ be positive integers and $\Delta\ge 1$ a positive constant. Let $k'$ be a finite extension of $k$ and $S \subset$ $M_{k}$ be a finite set containing all archimedean places. For $n=1,2, \ldots$, let $F_{n}({\bf x})=F_{n}(x_{0}, \ldots, x_{m}) \in \mathcal{O}_{S}[{\bf x}]$ be a sequence of semi-$q$-decomposable forms of degree $\ell$. Assume that $F_n=Q_{1,n} \cdots Q_{q,n}$ over $k'$ with $\deg Q_{j,n}=d_j, 1 \leqslant j \leqslant q$, for each $n$. Let $d=\max _{1 \leqslant j \leqslant q}d_j$. Assume that $\ell>d\Delta\left(\frac{m}{2}+1\right)^2$ and the family of $q$ moving hypersurfaces $\{Q_{1,n}, \ldots ,Q_{q,n}\}$ (indexed by $\mathbb N$) has distributive constant not exceeding $\Delta$ for each $n$. Let $c, \lambda$ be real numbers with $c>0, \lambda<\ell-d\Delta\left(\frac{m}{2}+1\right)^{2}$. Then, there does not exist an infinite sequence of $\mathcal{O}_{S}^{*}$-non-proportional $x_{n} \in \mathcal{O}_{S}^{m+1}, n=1,2, \ldots$, for which
\begin{align}\label{1.2}
0<\prod_{v \in S}\|F_{n}({\bf x}_{n})\|_{v} \leqslant c H_{S}^{\lambda}({\bf x}_{n})
\end{align}
and
\begin{align}\label{1.3}
h(F_{n})=o(h({\bf x}_{n})) \text { as } n\rightarrow \infty.
\end{align}
\end{theorem}

From the above theorem, we immediately get the following corollary.
\begin{corollary}\label{1.4}
Let $k$ be a number field and $\{Q_{1}, \ldots, Q_{q}\}$ a family of $q$ homogeneous polynomials in variables $(x_1,\ldots,x_m)$ with coefficients in $k$ and which has the distributive constant not exceeding a positive number $\Delta$. Let $d_{j}:=\deg Q_{j}\ (1\leqslant j\leqslant q)$ and $d:=\max_{1 \leqslant j \leqslant q} d_{j}$ and $F=Q_{1}\cdots Q_{q}$.  Assume that $\deg F>d\Delta(\frac{m}{2}+1)^2$. Then, for every finite set $S$ of places of $k$ containing the archimedean places of $k$, for each positive number $\lambda<\deg F-d\Delta(\frac{m}{2}+1)^2$ and for each constant $c>0$, the inequality
$$0<\prod_{v \in S}\|F(x_{1}, \ldots, x_{m})\|_{v} \leqslant c H_{S}^{\lambda}(x_{1}, \ldots, x_{m}) \quad \text { in } \quad(x_{1}, \ldots, x_{m}) \in \mathcal{O}_{S}^{m}$$
has only finitely many $\mathcal{O}_{S}^{*}$-non-proportional solutions.
\end{corollary}
This corollary is an improvement and also a generalization of Theorem 4.2 in \cite{CRY}.
 
\section{Auxiliary result}

Let $k$ be a number field. Let $M_k$ be the set of places of $k$ and $M^{\infty}_k$ be the set of Archimedean places. For each $v\in M_k$, we choose the normalized absolute value $|\cdot |_v$ such that $|\cdot |_v=|\cdot|$ on $\mathbb Q$ (the standard absolute value) if $v$ is archimedean, and $|p|_v=p^{-1}$ if $v$ is non-archimedean and lies above the rational prime $p$. For each $v\in M_k$, denote by $k_v$ the completion of $k$ with respect to $v$ and set 
\begin{align*}
n_v :&= [k_v :\mathbb Q_v]/[k :\mathbb Q];\\ 
\text{ and }\|x\|_v:& =|x|^{n_v}_v\text{ for }  x\in k^*.
\end{align*}
Let $S$ be a finite subset of $M_k$, which contains $M^{\infty}_k$. An element $x\in k$ is said to be an $S$-integer if $\|x\|_v\leqslant 1$ for each $v\in M_k\setminus S$. We denote by $\mathcal O_S$ the set of all $S$-integers. Each unit of $\mathcal O_{S}$ is called an $S$-unit. The set of all $S$-units forms a multiplicative group which is denoted by $\mathcal O_{S}^*$.

Throughout this paper, the number $m$ is fixed. For ${\bf x} = (x_0 ,\ldots , x_m)\in k^{m+1}$, define
$$\|{\bf x}\|_v :=\max\{\|x_0\|_v,\ldots,\|x_m\|_v\},\ v\in M_k.$$
Let ${\bf x}=(x_0:\cdots :x_m)\in\P^m(k)$. The absolute logarithmic height of ${\bf x}$  is defined by
$$h({\bf x}):=\sum_{v\in M_k}\log \|{\bf x}\|_v.$$
If $x\in k^*$, we define the absolute logarithmic height of $x$ by
$$h(x):=\sum_{v\in M_k}\log^+ \|x\|_v,$$
where $\log^+a=\log\max\{1,a\}.$
For ${\bf x}=(x_{1}, \ldots, x_{m}) \in k^{m}$, we also define the $S$-height as 
$$H_{S}({\bf x})=\prod_{v \in S}\|x\|_{v}$$ 
and the logarithmic $S$-height as $h_{S}({\bf x})=\log H_{S}({\bf x})$. 
If ${\bf x}\in\mathcal{O}_{S}^{m} \setminus\{0\}$, then $H_{S}({\bf x}) \geqslant 1$ and $H_{S}(\alpha {\bf x})=H_{S}({\bf x})$ for all $\alpha \in \mathcal{O}_{S}^{*}$. It is clear that $H_{S}({\bf x}) \geqslant h({\bf x})$ for any ${\bf x}\in \mathcal{O}_{S}^{m} \setminus\{0\}.$

Let $Q=\sum_{I\in\mathcal T_d}a_I{\bf x}^I$ be a homogeneous polynomial of degree $d$ in $k[x_0,\ldots, x_m]$. The support $Q^*$ of $Q$ is a fixed hypersurface in $\P^n(\bar k)$ defined by
$$ Q^*=\{(x_0:\cdots:x_n)\in \P^n(\bar k)\ | \sum_{I\in\mathcal T_d}a_Ix^I=0\},$$
where $\bar k$ is the algebraic closure of the field $k$. The norm of $Q$ under $v\in S$ is defined by
$$\|Q\|_v =\max\{\|a_I\|_v; I\in\mathcal T_d\}.$$ 
For each ${\bf x}=(x_0,\ldots, x_m)$, it is easy to see that
\begin{align*}
\begin{cases}
\|Q({\bf x})\|_v\leqslant \sharp\mathcal T_d\cdot \|Q\|_v\cdot \|{\bf x}\|^d_v &\text{ if }v\in M^\infty_k,\\
\|Q({\bf x})\|_v\leqslant \|Q\|_v\cdot \|{\bf x}\|^d_v &\text{ if }v\in M_k\setminus M^\infty_k,
\end{cases}
\end{align*}
where $\sharp\mathcal T_d$ is the cardinalty of $\mathcal T_d$.
The height of $Q$ is defined by
$$h(Q)=\sum_{v\in M_k}\log \|Q\|_v.$$
For each $v\in M_k$, we define the Weil function $\lambda_{Q,v}$ by
$$\lambda_{Q,v}({\bf x}):=\log\frac{\|{\bf x}\|_v^d\cdot \|Q\|_v}{\|Q({\bf x})\|_v},\ {\bf x}\in\P^m(k)\setminus\{Q=0\}.$$ 

\section{Schmidt's subspace theorem for moving hypersurfaces}
To prove Theorem \ref{1.1}, we need to prove a Schmidt's subspace theorem for arbitrary families of moving hypersurfaces. We first recall the following.


Let $A\subset\Lambda$  be an infinite subset. We consider the set of all pairs $(C,a)$, where $C$ is a subset of $A$ with finite complement and $a: C\to k$ is a map. Denote by $\mathcal R_{A}^{0}$ the set of equivalence classes of all such pairs, where the equivalence relation is defined as follows: $(C_1,a_1) \sim (C_2,a_2)$ if there exists $C\subset C_1\cap C_2$ such that $C$ has finite complement in $A$ and $a_1|_{C}=a_2|_{C}.$ Then $\mathcal R_{A}^{0}$ has an obvious ring structure. Moreover, the base field $k$ can be embedded into $\mathcal R_{A}^{0}$ as constant maps.

We now consider a family $\mathcal Q=\{Q_{1},\ldots, Q_{q}\}$ of $q$ moving hypersurfaces indexed by an infinite set $\Lambda$ given by, for each $j=1,\ldots,q$ and $\alpha \in \Lambda$,
$$Q_j(\alpha)=\sum_{I \in \mathcal{T}_{d_j}}a_{j, I}(\alpha)\mathbf{x}^{I},\text{ where $d_j=\deg Q_j$.}$$
We fix the notation $\mathcal{I}_{d_j}=\{I_{j,1},\ldots,I_{j,n_{d_j}}\}$. We have the following notions.

$\bullet$ The subset $A$ is said to be coherent with respect to $\mathcal{Q}$ if for every polynomial
$P\in k[x_{1,I_{1,1}},\ldots,x_{1,I_{1,n_{d_{1}}}},\ldots,x_{q,I_{q,1}},\ldots,x_{q,I_{q,n_{d_{q}}}}]$, that is homogeneous in $x_{j,I_{j, 1}},\ldots,x_{j,I_{j,n_{d_j}}}$ for each $j=1,\ldots,q$, either
$$P\left(a_{1,I_{1,1}}(\alpha),\ldots,a_{1,I_{1,n_{d_{1}}}}(\alpha),\ldots,a_{q,I_{q, 1}}(\alpha),\ldots,a_{q,I_{q, n_{d_{q}}}}(\alpha)\right)$$
vanishes for all $\alpha\in A$ or it vanishes for only finitely many $\alpha\in A$.

$\bullet$ Let $\mathbf{x}=[x_{0}:\cdots:x_n]:\Lambda\rightarrow\mathbb{P}^{n}(k)$ be a sequence of points. The subset $A\subset\Lambda$ is said to be coherent with respect to $\mathcal{Q}$ and $\mathbf{x}$ if for every polynomial
$$P\in k[x_{1,I_{1,1}},\ldots,x_{1,I_{1,n_{d_1}}},\ldots,x_{q,I_{q,1}},\ldots,x_{q, I_{q,n_{d_q}}},x_0,\ldots,x_n],$$
that is homogeneous in $x_{j,I_{j,1}},\ldots,x_{j,I_{j,n_{d_j}}}$ for each $j=1,\ldots,q$ and homogeneous in $x_0,\ldots,x_n$, either
$$P\left(a_{1,I_{1,1}}(\alpha), \ldots, a_{1,I_{1,n_{d_{1}}}}(\alpha),\ldots,a_{q,I_{q,1}}(\alpha),\ldots, a_{q,I_{q,n_{d_{q}}}}(\alpha),x_{0}(\alpha),\ldots,x_n(\alpha)\right)$$
vanishes for all $\alpha\in A$ or it vanishes for only finitely many $\alpha\in A$.

By the definition, if $A$ is coherent with respect to $\mathcal{Q}$ and if for some $I_{t} \in \mathcal{I}_{d_j}$ we have $a_{j, I_{t}}(\alpha) \neq 0$ for all but finitely many $\alpha \in A$, then there are maps $\frac{a_{j, I_{s}}}{a_{j, I_{t}}}:\alpha\mapsto \frac{a_{j, I_{s}}(\alpha)}{a_{j, I_{t}}(\alpha)}$ defined on the set of $\alpha \in A$ with $a_{j, I_{t}}(\alpha) \neq 0$. Denote by $a_{j, s, t}$ the equivalence class of $\frac{a_{j, I_{s}}}{a_{j, I_{t}}}$ in $\mathcal{R}_{A}^{0}$. We define
$$
\mathcal{R}_{A, \mathcal{Q}}:=\left\{\frac{P\left(\ldots, a_{j, s, t}, \ldots\right)}{Q\left(\ldots, a_{j, s, t}, \ldots\right)}: P, Q \in k\left[\ldots, y_{j, s, t}, \ldots\right]\right\},
$$
which is the field of fractions of the subring of $\mathcal{R}_{A}^{0}$ generated by all $a_{j,s,t}$ over $k$.

Now suppose that $A$ is coherent with respect to $\mathcal{Q}$ and $\mathbf{x}$. If for some $u$ we have $x_{u}(\alpha) \neq 0$ for all but finitely many $\alpha \in A$, then on the set of these $\alpha$ we have maps $\frac{x_v}{x_u}:\alpha\mapsto\frac{x_v(\alpha)}{x_u(\alpha)}.$ For each $v$ denote by $x_{v,u}$ the equivalence class of this map in $\mathcal{R}_{A}^{0}$. Hence, if we set
$$
\mathcal{R}_{A,(\mathcal{Q}, \mathbf{x})}:=\left\{\frac{P\left(\ldots,a_{j,s,t},\ldots, x_{v,u}, \ldots\right)}{Q\left(\ldots,a_{j, s, t},\ldots,x_{v,u}, \ldots\right)}: P, Q \in k\left[\ldots, y_{j,s,t}, \ldots, y_{v,u}, \ldots\right]\right\},
$$
then $\mathcal{R}_{A,(\mathcal{Q}, \mathbf{x})}$ is the field of fractions of the subring of $\mathcal{R}_{A}^{0}$ generated over $k$ by all such elements $a_{j,s,t}$ and $x_{v,u}$. We note that $\mathcal{R}_{A,\mathcal{Q}}\subset \mathcal{R}_{A,(\mathcal{Q},\mathbf{x})}$.

 In this paper, for an element $a=\frac{P\left(\ldots,a_{j,s,t},\ldots\right)}{Q\left(\ldots,a_{j,s,t},\ldots\right)}\in\mathcal{R}_{A,\mathcal{Q}}$, we call the map $\widehat{a}:\alpha\mapsto\frac{P\left(\ldots,\frac{a_{j,I_{s}}}{a_{j,I_{t}}}(\alpha),\ldots\right)}{Q\left(\ldots,\frac{a_{j, I_{s}}}{a_{j, I_{t}}}(\alpha), \ldots\right)}$ a special representative of $a$. For a polynomial $P=\sum_{I}a_{I}\mathbf{x}^{I}\in\mathcal{R}_{A, \mathcal{Q}}\left[x_0,\ldots,x_n\right]$, assume that $\widehat{a}_{I}$ is a special representative of $a_{I}$ for each $I$. We call $\widehat{P}:=\sum_{I}\widehat{a}_{I}\mathbf{x}^{I}$ a special representative of $P$. Note that such $\widehat{P}$ is well defined at all but finitely many $\alpha\in A$.

\begin{definition}\label{2.1} Let $V$ be a subvariety of $\mathbb P^n(k)$ of dimension $m$ and $\mathbf{x}$ a sequence of points in $\mathbb{P}^{n}(k)$ indexed by an infinite set $\Lambda$. Let $\mathcal Q=\{Q_{1}, \ldots, Q_{q}\}$ be a family of $q$ moving hypersurfaces of $\mathbb P^n(k)$ indexed by $\Lambda$ such that $V$ is not contained in $Q_j(\alpha)^*\; ( j=1,\dots, q)$ for all, but finitely many $\alpha\in\Lambda$. 

(i) We say that $\mathbf{x}$ is $V$-linearly nondegenerate with respect to $\mathcal{Q}$ if for each infinite subset $A\subset \Lambda$, coherent with respect to $\mathcal Q$, there is no linear form $L\in\mathcal R_{A,\mathcal Q}[x_0,\dots,x_n]\setminus\mathcal I_{A,\mathcal Q}(V)$ such that for some (and hence all) special representative $\widehat{L}\left(x_{0}(\alpha),\ldots,x_n(\alpha)\right)=0$ for all but finitely many $\alpha\in A$, where $\mathcal I_{A,\mathcal Q}(V)$ is the ideal in $\mathcal R_{A, \mathcal Q}[x_0,\dots,x_n]$ generated by $\mathcal I(V)$ and $\mathcal I(V)$ is the ideal of all homogeneous polynomials in $k[y_0,\ldots,y_n]$ vanishing on $V$.

(ii) We say that $\mathbf{x}$ is $V$-algebraically nondegenerate with respect to $\mathcal Q$ if for each infinite subset $A\subset \Lambda$, coherent with respect to $\mathcal Q$, there is no homogeneous polynomial $P\in \mathcal R_{A,\mathcal Q}[x_0, \dots, x_n] \setminus \mathcal I_{A, \mathcal Q}(V)$  such that for some (and hence all) special  representative $\widehat P$ we have $\widehat P(\alpha)(x_0(\alpha), \dots, x_n(\alpha)) =0$ for all, but finitely many $\alpha\in A$. 
\end{definition}

The following lemma is a slightly reformulation of  Lemma 2.2 in \cite{CQT}.
\begin{lemma}[{cf. \cite[Lemma 2.2]{CQT}}]\label{2.2}
Let $V$ be a subvariety of $\mathbb P^n(k)$ of dimension $m$ and $\mathcal Q=\{Q_{1}, \ldots, Q_{q}\}$ a family of $q$ moving hypersurfaces of $\mathbb P^n(k)$ indexed by an infinite index set $\Lambda$ of the same degree $d\ge 1$. Let $A$ be an infinitely subset of $\Lambda$ which is coherent with respect to $\mathcal Q$. Assume that $\bigcap_{j=1}^{\ell}Q_j(\alpha)^*\cap V(\overline k)=\emptyset$ and
$$ \dim\left(\bigcap_{j=0}^{\ell}Q_j(\alpha)^*\cap V(\overline k)\right)=m-u\; \text{for all}\; t_{u-1}\leqslant s<t_u,\; 1\leqslant u\leqslant m, $$
for all $\alpha\in A$ outside a finite set, where $t_0,t_1,\dots,t_m$ are integers with $1=t_0<t_1<\dots<t_m=\ell.$ Then there exists $m+1$ hypersurfaces $P_0,\dots,P_m$ in $\mathcal R_{A}^{0}[x_0, \dots, x_n]$ of the forms
$$ P_u=\sum_{j=1}^{t_u}c_{uj}Q_j,\; c_{uj}\in k,\; u=0,\dots, m, $$
such that the system of equations
$$ P_{u} (\alpha)(x_0, \dots, x_n)=0, \hspace{1cm} 0\leqslant u \leqslant m$$
has no solution $(x_0,\dots,x_n)$ with $(x_0:\cdots:x_n) \in V(\overline k)$ for all but finitely many $\alpha \in A$.
\end{lemma}

\begin{lemma}[{cf. \cite[Lemma 3.9]{Q4}}]\label{2.3}
Let $t_0,t_1,\dots,t_m$  be $m+1$ integers such that $1=t_0<t_1<\dots<t_m,$ and let $\Delta=\max_{1\leqslant s\leqslant m}\dfrac{t_s-t_0}{s}.$ Then for every  $m$ real numbers $a_0,a_1,\dots,a_{m-1}$ with $a_0\ge a_1\ge \dots\ge a_{m-1}\ge 1,$ we have
$$ a_0^{t_1-t_0}a_1^{t_2-t_1}\cdots a_{m-1}^{t_m-t_{m-1}}\leqslant (a_0a_1\cdots a_{m-1})^{\Delta}.$$
\end{lemma}

We now prove the following Schmidt's subspace theorem for arbitrary sequence of points and families of moving hypersurfaces.
\begin{theorem}\label{2.4} Let $k$ be a number field and $S \subset M_k$ be a finite set, containing all archimedean places. Further, let $\Lambda$ be an infinite set. Let $V$ be a subvariety of dimension $m$ of $\P^n(k)$ and $\mathcal Q=\{Q_1,\dots, Q_q\}$ be a family of moving hypersurfaces in $\P^n(k)$ indexed by $\Lambda$ with distributive constant $\Delta_{\mathcal Q,V}$ with respect to $V$. Let ${\bf x}=[x_0,\ldots,x_n]:\Lambda \to V$ be a sequence of points satisfying
$$h(Q_j(\alpha))=o(h({\bf x}(\alpha))) \text{ for all }\alpha\in\Lambda \text{ and }j=1, \dots,q$$
(i.e., for all $\delta>0$,  $h(Q_j(\alpha))\leq\delta h({\bf x}(\alpha))$ for all, but finitely many, $\alpha\in \Lambda).$
Then, for any $\epsilon >0,$ there exists an infinite index subset $A\subset \Lambda$ such that
\begin{eqnarray*} \sum_{v\in S}\sum_{j=1}^{q}\frac{\lambda_{Q_j(\alpha), v}({\bf x}(\alpha))}{\deg Q_j}\leqslant \left(\Delta_{\mathcal Q,V}\left(\frac{m}{2}+1\right)^2+\epsilon\right)h({\bf x}(\alpha)
)\end{eqnarray*}
holds for all $\alpha \in A.$
\end{theorem}

We noted that, in order to prove Theorem \ref{1.1}, it suffices to establish the above theorem in the case where $V=\P^n(k)$. However, due to the intrinsic interest and the importance of the Schmidt subspace theorem itself, we state and will prove the above theorem in the more general case where $V$ is a projective subvariety of $\P^n(k)$. 

Our proof strategy consists of three steps. In the first step, we apply the hyperpsurface replacement method to estimate the sum of the Weil functions associated with the given $q$ moving hypersurfaces by bounding it with the sum of the Weil functions of new $m+1$ moving hypersurfaces in general position in $V$. Next, we apply the hypersurface replacement method again to reduce the problem to the case of  $\ell +1$ moving hypersurfaces in general position with respect to the variety defined by the ideal generated by all homogeneous polynomials vanishing on the sequence of points ${\bf x}$. Finally, by employing the construction of a new filtration of Corvaja-Zannier type introduced by Ji, Yan, and Yu, together with their arguments, we estimate the sum of the Weil functions for latest $\ell +1$ moving hypersurfaces and obtain the desired inequality of the theorem.

\begin{proof}[Proof of Theorem \ref{2.4}]
Replacing $Q_{j}$ by $Q_{j}^{d / d_{j}}$ if necessary, where $d$ is the least common multiple of $d_{j}$'s, we can assume that $Q_{1}, \ldots, Q_{q}$ have the same degree $d$. Set
$$Q_{j}(\alpha)=\sum_{I \in \mathcal{T}_d} a_{j, I}(\alpha) \mathbf{x}^{I}, \quad j=1, \ldots, q.$$
Take an infinite index subset of $\Lambda$ such that the distributive constant $\Delta_{\mathcal Q,V}$ is defined on this set. 
Then there exists an infinite index subset $A\subset \Lambda$ which is coherent with respect to $\mathcal Q.$ For simplicity, we may assume that the coefficients of all $Q_j$'s belong to $\mathcal R_{A, \mathcal Q}.$  

If there exists $\alpha\in A$ such that $\bigcap_{j=1}^{q}Q_j(\alpha)^*\cap V(\overline k)\ne \emptyset,$ then $\Delta_{\mathcal Q,V}\ge \frac{q}{m}$
and hence the desired inequality of the theorem is clear. Hence, we may assume that $\bigcap_{j=1}^{q}Q_j(\alpha)^*\cap V(\overline k)=\emptyset$ for all $\alpha\in A.$ 

In this proof, for a sequence of points ${\bf x}:\Lambda\rightarrow\P^n(k)$, we denote by $\mathcal C_{\bf x}$ the set of all positive functions $g$ defined over a subset $C$ of $A$ with finite complement such that
$$ \log^{+}(g(\alpha)) =o(h({\bf x}(\alpha))),$$
i.e., for every $\varepsilon >0$, there exists a subset $C_{\varepsilon}\subset C$ with finite complement satisfying $\log^{+}(g(\alpha))\leqslant \varepsilon h({\bf x}(\alpha))$ for all $\alpha \in C_{\varepsilon}.$ Then, we may choose a function $c_v\in \mathcal C_{\bf x}$ such that
$$ \|Q(\alpha)({\bf x(\alpha)})\|_v\leqslant c_v(\alpha)\|{\bf{x}(\alpha)}\|_v^{d}$$
for all $\alpha\in A,v\in S$ and $Q\in\mathcal Q$ (see Lemma 2.1 in \cite{CQT}).

Denote by $\mathcal J$ the set of all permutations $(i_1,\ldots,i_q)$ of $\{1,\ldots,q\}$. For each permutation $J=(i_1,\dots, i_q)\in \mathcal J,$ there exists $m+1$ integers $t_{J,0}, t_{J,1},\dots, t_{J,m}$ with
$1=t_{J,0}<\dots<t_{J,m}=\ell_J\leqslant q$ such that $\bigcap_{j=1}^{\ell_J}Q_{i_j}(\alpha)^*\cap V(\overline k)=\emptyset$ and
$$\dim\left(\bigcap_{j=1}^{s}Q_{i_j}(\alpha)^*\cap V(\overline k)\right)=m-u\; \text{for all}\; t_{J,u-1}\leqslant s<t_{J,u},\ 1\leqslant u\leqslant m.$$
We easily have $\Delta_{\mathcal Q,V}\geqslant\dfrac{t_{J,u}-1}{u}$ for all $1\leqslant u\leqslant m.$ Denote by $P_{J, 0}\dots, P_{J,m}$ the 
moving hypersurfaces obtained in Lemma \ref{2.3} with respect to the family of moving hypersurfaces $\{Q_{i_1}, \dots, Q_{i_q}\},$ which are written by
$$P_{J,u}=\sum_{j=1}^{t_{J,u}}c_{uj}Q_{i_j}, c_{uj}\in k\; \text{for all}\; u=0,\dots, m.$$
Then, there exists a positive function $h_1\in \mathcal C_{\bf x}$ (chosen commonly for all $J$) such that
$$\|P_{J, u}({\bf x}(\alpha))\|_v\leqslant h_1(\alpha)\max_{1\leqslant j\leqslant t_{J,u}}\|Q_{i_j}({\bf x}(\alpha))\|_v$$
for every for every $v\in S,$ for all $\alpha\in A$ outside a finite subset, and for all $u=0,\dots,m.$ 

We fix a space $v\in S$. By passing to an infinite subset of $A$ if necessary, there is a permutation ${J_v}=(j_{v,1},\ldots,j_{v,q})$ of $\{1,\dots,q\}$ such that
$$\|Q_{j_{v,1}}(\alpha)({\bf x}(\alpha))\|_v\leqslant   \|Q_{j_{v,2}}(\alpha)({\bf x}(\alpha))\|_v\leqslant \dots\leqslant \|Q_{j_{v,q}}(\alpha)({\bf x}(\alpha))\|_v$$
for all $\alpha\in A$. Therefore, there exists a positive function $h_2\in \mathcal C_{\bf x}$ such that
$$\|{\bf x}(\alpha)\|_{v}^{d}\leqslant h_2(\alpha)\max_{1\leqslant i\leqslant \ell_{{J_v}}}\|Q_{j_{v,i}}(\alpha)({\bf x}(\alpha))\|_v
\leqslant h_2(\alpha) \|Q_{i_{\ell_{{J_v}}}}(\alpha)({\bf x}(\alpha))\|_v.$$
Then, we get
\begin{align*}
\prod_{i=1}^{q}\dfrac{\|{\bf x}(\alpha)\|_{v}^{d}}{\|Q_{i}(\alpha)({\bf x}(\alpha))\|_v}&\leqslant \Big(\dfrac{\|{\bf x}(\alpha)\|_v^d}{\|Q_{i_{\ell_{{J_v}}}}(\alpha)({\bf x}(\alpha))\|_v}\Big)^{q-\ell_{{J_v}}+1}\prod_{i=1}^{\ell_{{J_v}}-1}\dfrac{\|{\bf x}(\alpha)\|_v^d}{\|Q_{j_{v,i}}(\alpha)({\bf x}(\alpha))\|_v}\\
&\leqslant \dfrac{h_2(\alpha)^{q-\ell_{{J_v}}+1}}{c_v(\alpha)^{\ell_{{J_v}}-1}}\prod_{i=1}^{\ell_{{J_v}}-1}\dfrac{c_v(\alpha)\|{\bf x}(\alpha)\|_{v}^{d}}{\|Q_{j_{v,i}}(\alpha)({\bf x}(\alpha))\|_v}\\
&\leqslant \dfrac{{h_2}(\alpha)^{q-\ell_{{J_v}}+1}}{c_v(\alpha)^{\ell_{{J_v}}-1}}\prod_{i=0}^{m-1}
\Big(\dfrac{c_v(\alpha)\|{\bf x}(\alpha)\|_{v}^{d}}{\|Q_{j_{v,t_{{J_v},i}}}(\alpha)({\bf x}(\alpha))\|_v}\Big)^{j_{v,t_{{J_v},i+1}}-j_{v,t_{{J_v},i}}}\\
\text{[by Lemma \ref{2.3}]}&\le\dfrac{h_2(\alpha)^{q-\ell_{{J_v}}+1}}{c_v(\alpha)^{\ell_{{J_v}}-1}}\prod_{i=0}^{m-1}
\Big(\dfrac{c_v(\alpha)\|{\bf x}(\alpha)\|_{v}^{d}}{\|Q_{j_{v,t_{{J_v},i}}}(\alpha)({\bf x}(\alpha))\|_v}\Big)^{\Delta_{\mathcal Q,V}}\\
&\leqslant \dfrac{h_2(\alpha)^{q-\ell_{{J_v}}+1}h_1(\alpha)^{-m\Delta_{\mathcal Q,V}}}{c_v(\alpha)^{\ell_{{J_v}}-1-m\Delta_{\mathcal Q,V}}}\prod_{i=0}^{m-1}\Big(\dfrac{\|{\bf x}(\alpha)\|_{v}^{d}}{\|P_{J_v,i}(\alpha)({\bf x}(\alpha))\|_v}\Big)^{\Delta_{\mathcal Q,V}}.
\end{align*}
Since $\|P_{J_{v,m}}(\alpha)({\bf x}(\alpha))\|_v\le\sharp\mathcal T_d\cdot\|P_{J_{v,m}}(\alpha)\|_v\cdot\|{\bf x}(\alpha)\|^d_v$, 
the above inequality yields that
\begin{align}\label{2.5}
\log \prod_{j=1}^{q}\dfrac{\|{\bf x}(\alpha)\|_v^{d}}{\|Q_j(\alpha)({\bf x}(\alpha))\|_v}\leqslant \Delta_{\mathcal Q,V}\log \prod_{i=0}^{m}\dfrac{\|{\bf x}(\alpha)\|_v^{d}}{\|P_{J_v,i}(\alpha)({\bf x}(\alpha))\|_v}+o(h({\bf x}(\alpha))).
\end{align}

Since $A$ is coherent with respect to $\mathcal{Q}$ and $\mathbf{x}$, for all $P \in \mathcal{R}_{A, \mathcal{Q}}\left[x_0, \ldots, x_n\right]$, we have that either $\widehat{P}(\mathbf{x}(\alpha))$ vanishes for all but finitely many $\alpha \in A$, or $\widehat{P}(\mathbf{x}(\alpha))$ vanishes for only finitely many $\alpha \in A$.

We denote by $I_{\mathcal{R}_{A, \mathcal{Q}}} \subset \mathcal{R}_{A, \mathcal{Q}}\left[x_0, \ldots, x_n\right]$ the homogeneous ideal generated by all homogeneous polynomials $P \in \mathcal{R}_{A, \mathcal{Q}}\left[x_0, \ldots, x_n\right]$ such that $\widehat{P}(\mathbf{x}(\alpha))=0$ for all but finitely many $\alpha \in A$. 
Since $\mathcal{R}_{A, \mathcal{Q}}\left[x_0, \ldots, x_n\right]$ is a Noetherian ring, $I_{\mathcal{R}_{A, \mathcal{Q}}}$ is finitely generated. Therefore, $I_{\mathcal{R}_{A, \mathcal{Q}}}$ has a finitely generating set $\{P_{1}, \ldots, P_{s}\}$ of $s$ homogeneous polynomials in $\mathcal{R}_{A, \mathcal{Q}}\left[x_0, \ldots, x_n\right]$ (note that if $V=\mathbb P^n(k)$ and ${\bf x}$ is algebraically nondegenerate then $I_{\mathcal{R}_{A, \mathcal{Q}}}=\{0\}$, and in this case the generating set of $I_{\mathcal{R}_{A, \mathcal{Q}}}$ is empty, i.e., $s=0$).

Let $\Omega$ be an algebraically closed extension of $\mathcal{R}_{A, \mathcal{Q}}$ containing $\mathcal{R}_{A,(\mathcal{Q}, \mathbf{x})}$. 
Consider the projective subvariety $W\subset \mathbb{P}^{n}(\Omega)$ (with the base field $\mathcal{R}_{A, \mathcal{Q}}$) defined by $I_{\mathcal{R}_{A, \mathcal{Q}}}$. Let $\deg W=\Delta$ and $\dim W=\ell$. As pointed out in \cite{JYY}, $\ell>0$. Note that $W$ is a subvariety of $V(\Omega)$, then $0<\ell \leqslant \dim V(\Omega)= m$. 

For a positive integer $N$, denote by $\mathcal{R}_{A, \mathcal{Q}}\left[x_0, \ldots, x_n\right]_{N}$ the vector space of homogeneous polynomials of degree $N$. We set 
$$I_{\mathcal{R}_{A, \mathcal{Q}}, N}:=I_{\mathcal{R}_{A, \mathcal{Q}}} \cap \mathcal{R}_{A, \mathcal{Q}}\left[x_0, \ldots, x_n\right]_{N}$$
and 
$$W_{N}:=\mathcal{R}_{A, \mathcal{Q}}\left[x_0, \ldots, x_n\right]_{N} / I_{\mathcal{R}_{A, \mathcal{Q}}, N}.$$
For any $g \in \mathcal{R}_{A, \mathcal{Q}}\left[x_0, \ldots, x_n\right]_{N}$, denote by $[g]$ the equivalence class of $g$ in $W_{N}$. By the theory of Hilbert polynomials, we have the following claim.
\begin{claim}\label{2.6}
There exists a positive integer $N_{0}$ such that
$$M:=\operatorname{dim}_{\mathcal{R}_{A, \mathcal{Q}}} W_{N}=\frac{\Delta N^{\ell}}{\ell!}+O\left(N^{\ell-1}\right)$$
is a polynomial of $N$ for $N \geqslant N_{0}$.
\end{claim}

Let $\widehat{P}_{1}, \ldots, \widehat{P}_{s}$ be the special representatives of $P_{1},\ldots,P_{s}$ respectively. For $\alpha \in A$ such that all $\widehat{P}_{1}, \ldots, \widehat{P}_{s}$ are well defined at $\alpha$, denote by $I(\alpha)$ the homogeneous ideal in $k\left[x_0, \ldots, x_n\right]$ generated by $\widehat{P}_{1}(\alpha), \ldots, \widehat{P}_{s}(\alpha)$ and by $W(\alpha)$ the variety in $\mathbb{P}^{n}(\bar{k})$ defined by $I(\alpha)$. Then we have the following fact.
\begin{claim}[{see \cite[Lemma 2.2]{JYY}}]\label{2.7}
$\operatorname{dim} W(\alpha)=\ell$ for all but finitely many $\alpha \in A$.
\end{claim}
By passing to an infinite subset of $A$, we can assume that for every $v\in S$, there is a permutation $(i^v_1,\ldots,i^v_{m+1})$ of $\{0,1,\ldots,m\}$ such that
$$\|\widehat{D}_{v,1}(\alpha)({\bf x}(\alpha))\|_v\leqslant   \|\widehat{D}_{v,2}(\alpha)({\bf x}(\alpha))\|_v\leqslant \dots\leqslant \|\widehat{D}_{v,m+1}(\alpha)({\bf x}(\alpha))\|_v$$
for every $\alpha\in A$, where $D_{v,i}=P_{J_v,i^v_i}$ and $\widehat{D}_{v,i}=\widehat{P}_{J_v,i^v_i}$. Similar as Lemma \ref{2.3}, we have the following lemma.
\begin{lemma}[{see also \cite[Lemma 2.3]{JYY}}]\label{2.8}
Let $\alpha \in A$ satisfying the following conditions:
\begin{itemize}
\item[(i)] $\widehat{P}_{1}, \ldots, \widehat{P}_{s}, \widehat{D}_{v,1}(\alpha), \ldots, \widehat{D}_{v,m+1}(\alpha)$ are well defined at $\alpha$,
\item[(ii)] $\operatorname{dim} W(a)=\ell$.
\end{itemize}
Then there exist polynomials $\widehat{\widetilde{D}}_{v,1}(\alpha), \widehat{\widetilde{D}}_{v,2}(\alpha), \ldots, \widehat{\widetilde{D}}_{v,\ell+1}(\alpha) \in$ $k\left[x_0, \ldots, x_n\right]$, where $\widehat{\widetilde{D}}_{v,1}(\alpha)=\widehat{D}_{v,1}(\alpha),$ with
$$\widehat{\widetilde{D}}_{v,t}(\alpha)=\sum_{j=1}^{m-\ell+t} c_{tj} \widehat{D}_{v,j}(\alpha), \quad c_{tj} \in k, t\geq 2$$
such that the system of $(s+\ell)$ equations
\begin{align}\label{2.9}
\begin{cases}
&\widehat{P}_{i}(\alpha)\left(x_0, \ldots, x_n\right) =0\ \forall\ 1\leqslant i\leqslant s, \\
&\widehat{\widetilde{D}}_{v,i}(\alpha)\left(x_0, \ldots, x_n\right) =0\  \forall\ 1\leqslant i\leqslant \ell+1
\end{cases}
\end{align}
has no nontrivial solution in $\bar{k}^{n+1}$.
\end{lemma}

Let $\widetilde{D}_{v,1}={D}_{v,1}$ and $\widetilde{D}_{v,t}=\sum_{j=1}^{m-\ell+t} c_{tj}D_{v,j}\in  \mathcal{R}_{A, \mathcal{Q}}\left[x_0, \ldots, x_n\right],\ \forall\ t\geq 2.$ By using the system of resultants of $P_1,\ldots,P_s,\widetilde{D}_{v,1},\ldots,\widetilde{D}_{v,\ell+1}$, from the coherence with respect to $\mathcal Q$ of $A$, we have that the system of equations (\ref{2.9}) has no nontrivial solution in $\bar{k}^{n+1}$ for all but finitely many $\alpha\in A$. Passing to an infinite subset of $A$ again if necessary, we suppose that (\ref{2.9}) has no nontrivial solution in $\bar{k}^{n+1}$ for all $\alpha\in A$.

Of course $\widehat{\widetilde{D}}_{v,i}$ is a special representative of $\widetilde{D}_{v,i}$ for each $1\leqslant i\leqslant \ell+1$.
By the definition, we easily have
$$\left\|\widehat{\widetilde{D}}_{v,t}(\alpha)(\mathbf{x}(\alpha))\right\|_{v}\leqslant C\left\|\widehat{D}_{v,m-\ell+t}(\alpha)(\mathbf{x}(\alpha))\right\|_{v}$$
for $2\leqslant t \leqslant \ell+1$, where $C$ is a positive constant. Hence
$$\log \frac{\|\mathbf{x}(\alpha)\|_{v}^{d}\left\|\widehat{D}_{v,m-\ell+t}(\alpha)\right\|_{v}}{\left\|\widehat{D}_{v,m-\ell+t}(\alpha)(\mathbf{x}(\alpha))\right\|_{v}} \leqslant \log \frac{\|\mathbf{x}(\alpha)\|_{v}^{d}\left\|\widehat{\widetilde{D}}_{v,t}(\alpha)\right\|_{v}}{\left\|\widehat{\widetilde{D}}_{v,t}(\alpha)(\mathbf{x}(\alpha))\right\|_{v}}+\log h'(\alpha)\text { with } h'\in C_{\mathbf{x}}$$
for $2\leqslant t \leqslant \ell+1$. Combining these inequalities with (\ref{2.5}), we easily have
\begin{align}\label{2.10}
\begin{split}
\log \prod_{j=1}^{q}&\frac{\|\mathbf{x}(\alpha)\|_{v}^{d}\left\|Q_{j}(\alpha)\right\|_{v}}{\left\|Q_{j}(\alpha)(\mathbf{x}(\alpha))\right\|_{v}}
\leqslant \Delta_{\mathcal Q,V}\log \prod_{i=1}^{m}\dfrac{\|{\bf x}(\alpha)\|_v^{d}\left\|D_{v,i}(\alpha)\right\|_{v}}{\|D_{v,i}(\alpha)({\bf x}(\alpha))\|_v}+o(h({\bf x}(\alpha)))\\
&\leqslant \Delta_{\mathcal Q,V}(m-\ell+1) \log \prod_{t=1}^{\ell} \frac{\|\mathbf{x}(\alpha)\|_{v}^{d}\left\|\widehat{\widetilde{D}}_{v,t}(\alpha)\right\|_{v}}{\left\|\widehat{\widetilde{D}}_{v,t}(\alpha)(\mathbf{x}(\alpha))\right\|_{v}}+o(h({\bf x}(\alpha))).
\end{split}
\end{align}

Now, for an integer $N$ big enough, divisible by $d$, using the method of Ji, Yan and Yu in \cite{JYY} we will construct a filtration of $W_N$ with respect to $\left\{\widetilde{D}_{v,1}, \ldots, \widetilde{D}_{v,\ell}\right\}$ as follows. 

Arrange by the lexicographic order the $\ell$-tuples ${\bf i}=\left(i_1, \ldots, i_{\ell}\right)$ of nonnegative integers and set $\|\mathbf{i}\|=\sum_j i_j$. For each ${\bf i}=\left(i_1, \ldots, i_{\ell}\right)$, we define
$$W_{v,{\bf i}}=\sum_{{\bf e}=(e_1,\ldots,e_\ell)\geq {\bf i}} \widetilde{D}_{v,1}^{e_1} \cdots \widetilde{D}_{v,\ell}^{e_{\ell}} \cdot \mathcal{R}_{A, \mathcal{Q}}\left[x_0, \ldots, x_n\right]_{N-d\|{\bf e}\|} .$$
Therefore, $W_{v,(0, \ldots, 0)}=\mathcal{R}_{A, \mathcal{Q}}\left[x_0, \ldots, x_n\right]_N$ and $W_{v,{\bf i}} \supset W_{v,{\bf i}'}$ if ${\bf i}'>{\bf i}$. Then $\left\{W_{v,{\bf i}}\right\}$ is a filtration of $\mathcal{R}_{A, \mathcal{Q}}\left[x_0, \ldots, x_n\right]_N$. Set $W_{v,{\bf i}}^*=\left\{[g] \in W_N \mid g \in W_{v,{\bf i}}\right\}$, where $[g]=g \bmod I_{\mathcal{R}_{A, \mathcal{Q}}, N}$. Hence, $\left\{W_{v,{\bf i}}^*\right\}$ is a filtration of $W_N$. Suppose that ${\bf i}'$ follows ${\bf i}$ in the lexicographic order, then
$$\frac{W_{v,{\bf i}}^*}{W_{v,{\bf i}'}^*} \simeq \frac{\mathcal{R}_{A, \mathcal{Q}}\left[x_0, \ldots, x_n\right]_{N-d\|{\bf i}\|}}{I_N^{{\bf i}}} \text { (see \cite[Lemma 3.8]{YY}),}$$
where $I^{{\bf i}}_{N}$ is the subspace of  $\mathcal R_{A,\mathcal Q}[x_{0}, \ldots, x_{n}]_{N-d\|\ell\|}$ consisting of all $\gamma \in \mathcal R_{A,\mathcal Q}[x_{0}, \ldots, x_{n}]_{N-d\|\ell\|}$ such that
$$\widetilde{D}_{v,1}^{i_{1}}\cdots\widetilde{D}_{v,\ell}^{i_{\ell}}\gamma -\sum_{{\bf e}=(e_{1},\ldots,e_{\ell}) > {\bf i}}\widetilde{D}_{v,1}^{e_{1}}\cdots\widetilde{D}_{v,\ell}^{e_{\ell}}\gamma_{{\bf e}}\in I_{\mathcal R_{A,\mathcal Q},N}$$
for some $\gamma_{\bf e}\in\mathcal R_{A,\mathcal Q}[x_0,\ldots,x_n]_{N-d\|{\bf e}\|}$. We set
$$\Delta_{v,N}^{{\bf i}}:=\dim_{\mathcal{R}_{A, \mathcal{Q}}} \frac{W_{v,{\bf i}}^*}{W_{v,{\bf i}'}^*}=\dim_{\mathcal{R}_{A, \mathcal{Q}}} \frac{\mathcal{R}_{A, \mathcal{Q}}\left[x_0, \ldots, x_n\right]_{N-d\|{\bf i}\|}}{I_N^{{\bf i}}}$$
for every ${\bf i}<{\bf i}_0=(N/d,0,\ldots,0)$ and $\Delta_{v,N}^{{\bf i}_0}=1$.
From the inequality (15) in \cite{JYY}, we have
\begin{align}\label{2.11}
\sum_{{\bf i} \in \tau_N} \Delta_{v,N}^{\bf i}i_j=\frac{\Delta N^{\ell+1}}{(\ell+1)!d}+O\left(N^{\ell}\right),\ \forall 1\le j\le \ell,
\end{align}
where $\tau_N$ is the set of all $\ell$-tuples ${\bf i} \in \mathbb{Z}_{\geq 0}^{\ell}$ with $N-d\|{\bf i}\| \geq 0$.

We choose a basis $\mathcal{B}=\left\{\left[\psi^v_1\right], \ldots,\left[\psi^v_M\right]\right\}$ of $W_N$ such that for every ${\bf i}$ with $\dim W^*_{v,\bf i}=u$ then $\left\{\left[\psi^v_{M-u+1}\right], \ldots,\left[\psi^v_M\right]\right\}$ is a basis of $W^*_{v,\bf i}$ . Let $[\psi^v]$ be an element of the basis, which lies in $W_{v,{\bf i}}^* / W_{v,{\bf i}'}^*$, we may write $\psi^v=\widetilde{D}_{v,1}^{i_1} \cdots \widetilde{D}_{v,\ell}^{i_{\ell}} \gamma$, where $\gamma \in \mathcal{R}_{A, \mathcal{Q}}\left[x_0, \ldots, x_n\right]_{N-d\|{\bf i}\|}$. Therefore, for every $1 \leqslant j \leqslant \ell$, from (\ref{2.11}) we have
\begin{align}\label{2.12}
\begin{split}
&\log \prod_{t=1}^M \frac{\|\mathbf{x}(\alpha)\|_v^N\left\|\widehat{\psi}^v_t(\alpha)\right\|_v}{\left\|\widehat{\psi}^v_t(\alpha)(\mathbf{x}(\alpha))\right\|_v}\\
& \geq\left(\frac{\Delta N^{\ell+1}}{(\ell+1)!d}+O\left(N^{\ell}\right)\right)\log \prod_{j=1}^{\ell} \frac{\|\mathbf{x}(\alpha)\|_v^d\left\|\widehat{\widetilde D}_{v,j}(\alpha)\right\|_v}{\left\|\widehat{\widetilde D}_{v,j}(\alpha)(\mathbf{x}(\alpha))\right\|_v}+\log h^{\prime \prime \prime}(\alpha),
\end{split}
\end{align}
with a function $h^{\prime \prime \prime} \in C_{\mathbf{x}}$.

Fix a basis $\left[\phi_1\right], \ldots,\left[\phi_M\right]$ of $W_N$ with $\phi_1, \ldots, \phi_M \in \mathcal{R}_{A, \mathcal{Q}}\left[x_0, \ldots, x_n\right]$, and let
$$\Phi(\alpha):=\left[\widehat{\phi}_1(\alpha)(\mathbf{x}(\alpha)): \cdots: \widehat{\phi}_M(\alpha)(\mathbf{x}(\alpha))\right],$$
which is a map from $A$ into $\mathbb{P}^{M-1}(k).$
Obviously, $h(\Phi(\alpha))=N h(\mathbf{x}(\alpha))+o(h(\mathbf{x}(\alpha)))$.
Moreover, for $1 \leqslant t \leqslant M$, we may write
$$\psi^v_t\left(x_0, \ldots, x_n\right)=L_{v,t}\left(\phi_1\left(x_0, \ldots, x_n\right), \ldots, \phi_M\left(x_0, \ldots, x_n\right)\right),$$
where $L_{v,1}, \ldots, L_{v,M}$ are linear forms with coefficients in $\mathcal{R}_{A, D}$ of $M$ variables which are linearly independent over $\mathcal{R}_{A, \mathcal{Q}}$. By passing to an infinite subset of $A$ if necessary again, we may assume that $L_{v,1}(\alpha), \ldots, L_{v,M}(\alpha)$ are linearly independent over $k$ for all $\alpha \in A$. Then, we have
$$\widehat{\psi}^v_t(\alpha)(\mathbf{x}(\alpha))=\widehat{L}_{v,t}(\alpha)(\Phi(\alpha)) \text { for } t=1, \ldots, M .$$
Since there are only finitely many possible linear forms $L_{v,t}(v\in S, 1 \leqslant t \leqslant M)$, we denote them by $L_1, \ldots, L_\mu$ and let $\mathcal{H}:=\left\{L_1, \ldots,L_\mu\right\}$.

We claim that the sequence of points $\Phi(\alpha)$ of $\mathbb{P}^{M-1}(k)$ indexed by $A$ is linearly nondegenerate with respect to $\mathcal{H}$. Indeed, if not, then there are a infinite coherent subset index set $B \subset A$ and a linear form $L \in \mathcal{R}_{B, \mathcal{H}}\left[x_1, \ldots, x_M\right]$ such that $\widehat{L}(\alpha)(\Phi(\alpha))=0$ for all but finitely many $\alpha \in B$, which contradicts the assumption that there is no homogeneous polynomial $P \in \mathcal{R}_{A, \mathcal{Q}}\left[x_0, \ldots, x_n\right] \backslash I_{\mathcal{R}_{A, \mathcal{Q}}}$ such that $\widehat{P}(\alpha)(\mathbf{x}(\alpha))=0$ for infinitely many $\alpha \in A$ (note that $\mathcal{R}_{A, \mathcal{H}} \subset \mathcal{R}_{A, \mathcal{Q}}$).

Combining (\ref{2.10}) and (\ref{2.12}), we have
\begin{align}\label{2.13}
\begin{split}
\frac{\Delta N^{\ell+1}}{(\ell+1)!d}(1+o(1))&\sum_{v \in S} \log \prod_{j=1}^q \frac{\|\mathbf{x}(\alpha)\|_v^d\left\|Q_j(\alpha)\right\|_v}{\left\|Q_j(\alpha)(\mathbf{x}(\alpha))\right\|_v} \\
\leqslant &\Delta_{\mathcal Q,V}(m-\ell+1)\left[\sum_{v \in S} \max _K \sum_{j \in K} \log \frac{\|\Phi(\alpha)\|_v\left\|\widehat{L}_j(\alpha)\right\|_v}{\left\|\widehat{L}_j(\alpha)(\Phi(\alpha))\right\|_v}\right.\\
&\left.-M \sum_{v \in S} \log \|\Phi(\alpha)\|_v+MN \sum_{v \in S} \log \|\mathbf{x}(\alpha)\|_v\right]+o(h(\mathbf{x}(\alpha))),
\end{split}
\end{align}
where $\max _K$ is taken over all subsets $K$ of $\{1, \ldots, \mu\}$ with $\sharp K=M$ such that $\widehat{L}_j(\alpha)$, $j \in K$, are linearly independent over $k$ for all $\alpha \in A$. 

By \cite[Theorem B4.1.6]{Ru}, we have
\begin{align}\label{2.14}
\sum_{v \in S} \max _K \sum_{j \in K} \log \frac{\|\Phi(\alpha)\|_v\left\|\widehat{L}_j(\alpha)\right\|_v}{\left\|\widehat{L}_j(\alpha)(\Phi(\alpha))\right\|_v} \leq\left(M+\frac{1}{2}\right) h(\Phi(\alpha))
\end{align}
for all $\alpha \in A$. 
Since the above inequality is independent of the choice of components of ${\bf x}(\alpha)$, we can choose the components of ${\bf x}(\alpha)$ being $S$-integers so that
\begin{align}\label{2.15}
\begin{cases}
& \sum_{v\in S}\log ||{\bf x}(\alpha)||_v=h({\bf x}(\alpha))+O(1), \\
&\sum_{v\in S}\log\Vert \Phi(\alpha)\Vert_v\leqslant h(\Phi(\alpha))+O(1) \leqslant Nh({\bf x}(\alpha))+O(1).
\end{cases}
\end{align}
Combining (\ref{2.13}), (\ref{2.14}) and (\ref{2.15}), we obtain
\begin{align*}
\frac{\Delta N^{\ell+1}}{(\ell+1)!d}(1+o(1))& \sum_{v \in S} \log \prod_{j=1}^q \frac{\|\mathbf{x}(\alpha)\|_v^d\left\|Q_j(\alpha)\right\|_v}{\left\|Q_j(\alpha)(\mathbf{x}(\alpha))\right\|_v}\\
&\leq\Delta_{\mathcal Q,V}(m-\ell+1)\left(M+\frac{1}{2}\right) N h(\mathbf{x}(\alpha))+o(h(\mathbf{x}(\alpha))).
\end{align*}
By Claim \ref{2.6}, the above inequality implies that
$$\frac{1}{d} \sum_{v \in S} \log \prod_{j=1}^q \frac{\|\mathbf{x}(\alpha)\|_v^d\left\|Q_j(\alpha)\right\|_v}{\left\|Q_j(\alpha)(\mathbf{x}(\alpha))\right\|_v} \leqslant \Delta_{\mathcal Q,V}(m-\ell+1)(\ell+1+o(1)) h(\mathbf{x}(\alpha)).$$
Finally, we take $N$ large enough so that $\Delta_{\mathcal Q,V}(m-\ell+1)o(1)<\epsilon$ for the given $\epsilon$ in the theorem, and note that $(m-\ell+1)(\ell+1)\leqslant (\frac{m}{2}+1)^2.$ Then from the above inequality, we have
$$\sum_{v \in S}\log \prod_{j=1}^q \frac{\lambda_{Q_j(\alpha),v}({\bf x})}{\deg Q_j} \leq\left(\Delta_{\mathcal Q,V}(\frac{m}{2}+1)^2+\varepsilon\right) h(\mathbf{x}(\alpha))$$
for all but finitely many $\alpha \in A$. Passing to an infinite subset of $A$ if necessary, we may assume that this inequality hold for all $\alpha\in A$.
The theorem is proved.
\end{proof}

\section{Semi-decomposable form equations and inequalities}

\begin{proof}[Proof of Theorem \ref{1.1}]
Let $S'$ be a subset of $M_{k'}$ which consists of the extension of the places of $S$ to $k'$. Then every $S$-integer in $k$ is also an $S'$-integer in $k'$. Moreover, we have $H_{S}({\bf x}_{n})=H_{S'}({\bf x}_{n})$ and
$$\prod_{v \in S}\|F_{n}({\bf x}_{n})\|_{v}=\prod_{w \in S'}\|F_{n}({\bf x}_{n})\|_{w} \text { for } {\bf x}_{n} \in \mathcal{O}_{S}^{m+1}.$$
So the inequality (\ref{1.2}) is preserved when we work on $k'$. Therefore, without loss of generality we may assume that $k'=k$. 

Suppose on the contrary that there is an infinite sequence ${\bf x}_{n}=(x_{0, n}, \ldots, x_{m,n}) \in \mathcal{O}_{S}^{m+1}$ which satisfies (\ref{1.2}) and (\ref{1.3}). If the heights $\{h({\bf x}_{n})\}_{n\geqslant 1}$ are bounded then the set $\{{\bf x}_{n};n\geqslant 1\}$ is actually a finite union of sets of proportional vectors. Therefore, we may suppose that $h({\bf x}_{n})$ are not bounded. 

Without loss of generality, we may assume that $h({\bf x}_{n})\rightarrow \infty$ as $n\rightarrow \infty$. Then, by the assumption, (\ref{1.3}) also holds. Furthermore it follows that $H_{S}({\bf x}_{n})\rightarrow \infty$ as $n\rightarrow \infty$ (since ${\bf x}_n\in\mathcal{O}_{S}^{m+1}$). By $\max _{0\leqslant j\leqslant m}h(Q_{j, n}) \leqslant h(F_{n})+O(1)$, from the inequality (\ref{1.3}) we deduce that 
$$\max _{0\leqslant j\leqslant m}h(Q_{j, n})=o(h({\bf x}_{n})) \text { as } n\rightarrow \infty.$$
Take $\epsilon$ a positive number small enough so that $\ell>d\Delta\left(\frac{m}{2}+1\right)^{2}+\lambda+\epsilon$.
By Theorem \ref{2.4}, there is an infinite subsequence ${\bf x}_{n_p}\in \mathcal{O}_{S}^{m+1},p=1,2, \ldots$, of $\{{\bf x}_{n}\}$, which without loss of generality we may assume to be $\{{\bf x}_{n}\}$ itself, such that
$$\sum_{v \in S} \sum_{j=1}^{q}\dfrac{1}{d_{j,n}} \log \frac{\|{\bf x}_{n}\|_{v}^{d_{j,n}} \cdot\|Q_{j, n}\|_{v}}{\|Q_{j, n}({\bf x}_{n})\|_{v}} \leqslant \left(\Delta\left(\frac{m}{2}+1\right)^{2}+\frac{\epsilon}{d}\right) h({\bf x}_{n}).$$
By the assumption that $d\geqslant \max_{1\leqslant j\leqslant n}d_{j,n}$, the above inequality implies that
\begin{align}\label{3.1}
\sum_{v \in S} \sum_{j=1}^{q}\log \frac{\|{\bf x}_{n}\|_{v}^{d_{j,n}}\cdot\|Q_{j, n}\|_{v}}{\|Q_{j, n}({\bf x}_{n})\|_{v}} \leqslant \left(d\Delta\left(\frac{m}{2}+1\right)^{2}+\epsilon\right) h({\bf x}_{n}).
\end{align}
On the other hand, since ${\bf x}_{n} \in \mathcal{O}_{S}^{m+1}$, we have
\begin{align}\label{3.2}
h({\bf x}_{n}) \leqslant \log H_{S}({\bf x}_{n}).
\end{align}
With the note that $\ell=\sum_{j=0}^md_{j,n}$ for each $n$, from (\ref{3.1}) and (\ref{3.2}) we have 
$$\prod_{v \in S} \frac{\|{\bf x}_{n}\|_{v}^{\ell} \cdot \prod_{j=1}^{q}\|Q_{j, n}\|_{v}}{\|F_{n}({\bf x}_{n})\|_{v}} \leq(H_{S}({\bf x}_{n}))^{d\Delta\left(\frac{m}{2}+1\right)^{2}+\epsilon}$$
whence
$$\frac{H_{S}^{\ell}({\bf x}_{n}) \cdot \prod_{v \in S} \prod_{j=1}^{q}\|Q_{j, n}\|_{v}}{\prod_{v \in S}\|F_{n}({\bf x}_{n})\|_{v}} \leq(H_{S}({\bf x}_{n}))^{d\Delta\left(\frac{m}{2}+1\right)^{2}+\epsilon}.$$
It is clear that $\prod_{v \in S} \prod_{j=1}^{q}\|Q_{j, n}\|_{v} \geq c' \prod_{v \in S}\|F_{n}\|_{v} \geq c'$ for some positive constant $c'$.
Therefore, from (\ref{1.2}), for $n=1,2, \ldots$, we have
\begin{align*}
H_{S}^{\ell}({\bf x}_{n})&\leqslant \frac{1}{c'} \prod_{v \in S}\|F_{n}({\bf x}_{n})\|_{v} \cdot(H_{S}({\bf x}_{n}))^{d\Delta\left(\frac{m}{2}+1\right)^{2}+\epsilon}\\ 
&\leqslant \frac{c}{c'}(H_{S}({\bf x}_{n}))^{d\Delta\left(\frac{m}{2}+1\right)^{2}+\epsilon+\lambda}. 
\end{align*}
Since $H_{S}({\bf x}_{n})\rightarrow \infty$ as $n\rightarrow \infty$, by letting $n\rightarrow \infty$ we get $\ell>d\Delta(\frac{m}{2}+1)^{2}+\lambda+\epsilon$. This is a contradiction. Hence, this completes the proof of Theorem \ref{1.1}.
\end{proof}

\begin{remark}\label{3.3}
{\rm
a) If the family of moving hypersurfaces is in general position, our result will imply \cite[Theorem 1.2] {JYY} of  Ji, Yan and Yu.

b) With the same arguments in \cite{JYY} (see also in \cite{GR}), we will get a result on semi-$q$-decomposable form equations as follows.

Consider the $S$-integer solutions of a sequence of semi-decomposable form equations of the form
\begin{align}\label{3.4}
F_{n}({\bf x})=G_{n}({\bf x}),
\end{align}
where $G_{n}({\bf x})$ are nonzero polynomials.
Let ${\bf x}_{n}$ be the $\mathcal{O}_{S}^{*}$-nonproportional solutions of (\ref{3.4}). There is a positive constant $c$ such that
$$0<\prod_{v \in S}\|F_{n}({\bf x}_{n})\|_{v}=\prod_{v \in S}\|G_{n}({\bf x}_{n})\|_{v} \leqslant c \prod_{v \in S}\|G_{n}\|_{v}(H_{S}({\bf x}_{n}))^{\deg  G_{n}}.$$
Applying Theorem \ref{1.1} for $c$ and $\ell-d\Delta\left(\frac{m}{2}+1\right)^{2}-1<\lambda<\ell-d\Delta(\frac{m}{2}+1)^{2}$, we get the following theorem.}
\end{remark}
\begin{theorem}\label{3.5}
Let $\ell,m,q$ be positive integers. Let $k'$ be a finite extension of $k$ and $S \subset$ $M_{k}$ a finite set containing all archimedean places. For $n=1,2, \ldots$, let $F_{n}({\bf x})=$ $F_{n}(x_{0}, \ldots, x_{m}) \in \mathcal{O}_{S}[{\bf x}]$ denote a sequence of semi-$q$-decomposable forms of degree $\ell$. Assume that $F_{n}=Q_{1, n} \cdots Q_{q, n}$ over $k'$ with $\deg  Q_{j, n}=d_{j}, 1 \leqslant j \leqslant q$ and $\{Q_{1, n}, \ldots, Q_{q, n}\}$ has the distributive constant not exceeding a positive number $\Delta$ for each $n$. Let $d=\max _{1 \leqslant j \leqslant q} d_{j}$. Assume that $\ell>d\Delta\left(\frac{m}{2}+1\right)^{2}$. Let $G_{n}({\bf x}) \in \mathcal{O}_{S}[{\bf x}]$ such that $\deg  G_{n}<\ell-d\Delta\left(\frac{m}{2}+1\right)^{2}$ for each $n$. Then there does not exist an infinite sequence of $\mathcal{O}_{S}^{*}$-non-proportional ${\bf x}_{n} \in \mathcal{O}_{S}^{m+1}$ for which
\begin{align*}
F_{n}({\bf x}_{n})&=G_{n}({\bf x}_{n}) \neq 0, n=1,2, \ldots, \\
\log \prod_{v \in S}\|G_{n}\|_{v}&=o(\log H_{S}({\bf x}_{n}))\text { as } n\rightarrow \infty,\\
\text{and }h(F_{n})&=o(h({\bf x}_{n})) \text { as } n\rightarrow \infty.
\end{align*}
\end{theorem}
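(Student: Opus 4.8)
The plan is to deduce the statement directly from Theorem \ref{1.3}, by rewriting the semi-decomposable form equation $F_n({\bf x}_n)=G_n({\bf x}_n)$ as a Diophantine inequality of the shape (\ref{1.4}); this is the route already sketched in the remark preceding the statement. Assume for contradiction that there is an infinite sequence of $\mathcal O_S^*$-nonproportional ${\bf x}_n\in\mathcal O_S^{m+1}$ satisfying the three listed conditions, and write $L:=\ell-d\Delta(\tfrac m2+1)^2>0$.

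First I would establish the elementary estimate coming from the equation. Since $F_n({\bf x}_n)=G_n({\bf x}_n)\neq 0$, we have $0<\prod_{v\in S}\|F_n({\bf x}_n)\|_v=\prod_{v\in S}\|G_n({\bf x}_n)\|_v$. Applying to $Q=G_n$ the coefficientwise bounds recalled in Section 2 (so $\|G_n({\bf x}_n)\|_v\le\sharp\mathcal T_{\deg G_n}\,\|G_n\|_v\,\|{\bf x}_n\|_v^{\deg G_n}$ for $v\in M^\infty_k$ and $\|G_n({\bf x}_n)\|_v\le\|G_n\|_v\,\|{\bf x}_n\|_v^{\deg G_n}$ otherwise), and noting that $\deg G_n<L$ makes $\sharp\mathcal T_{\deg G_n}$ bounded uniformly in $n$, one obtains a constant $C>0$ depending only on $\ell,m,S$ with
\[
\prod_{v\in S}\|F_n({\bf x}_n)\|_v\le C\Big(\prod_{v\in S}\|G_n\|_v\Big)H_S({\bf x}_n)^{\deg G_n}\le C\Big(\prod_{v\in S}\|G_n\|_v\Big)H_S({\bf x}_n)^{D},
\]
where $D:=\sup_n\deg G_n$ is a nonnegative integer with $D<L$ (each $\deg G_n$ is an integer strictly smaller than $L$, so their supremum is at most $\lceil L\rceil-1<L$).

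Next I would absorb the $n$-dependent coefficient $\prod_{v\in S}\|G_n\|_v$ into the height. Choose $\epsilon>0$ with $\lambda:=D+\epsilon<L$, possible since $D<L$. By the hypothesis $\log\prod_{v\in S}\|G_n\|_v=o(\log H_S({\bf x}_n))$ together with $H_S({\bf x}_n)\ge1$, for all but finitely many $n$ we have $\log\prod_{v\in S}\|G_n\|_v\le\epsilon\log H_S({\bf x}_n)$, hence $\prod_{v\in S}\|F_n({\bf x}_n)\|_v\le C\,H_S({\bf x}_n)^{D+\epsilon}=C\,H_S({\bf x}_n)^{\lambda}$. Enlarging $C$ to a constant $c_0>0$ to cover the remaining finitely many indices (legitimate because $H_S({\bf x}_n)\ge1$), we get $0<\prod_{v\in S}\|F_n({\bf x}_n)\|_v\le c_0\,H_S({\bf x}_n)^{\lambda}$ for every $n$, i.e. (\ref{1.4}) holds with the constant $c_0$ and the exponent $\lambda$.

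Finally I would invoke Theorem \ref{1.3}. The sequence $(F_n)$ consists of semi-decomposable forms of degree $\ell$ with $F_n=Q_{1,n}\cdots Q_{q,n}$ over $k'$, $\deg Q_{j,n}=d_j\le d$, $\Delta_{\{Q_{j,n}\}_{1\le j\le q}}\le\Delta$ and $\ell>d\Delta(\tfrac m2+1)^2$, and $h(F_n)=o(h({\bf x}_n))$ — precisely the hypotheses of Theorem \ref{1.3} — while $\lambda<L=\ell-d\Delta(\tfrac m2+1)^2$ and $c_0>0$. Thus $({\bf x}_n)$ would be an infinite sequence of $\mathcal O_S^*$-nonproportional elements of $\mathcal O_S^{m+1}$ satisfying (\ref{1.4}) and (\ref{1.5}), contradicting Theorem \ref{1.3}. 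The only delicate point is the bookkeeping in the two middle paragraphs: since both $\deg G_n$ and the coefficient sizes $\prod_{v\in S}\|G_n\|_v$ vary with $n$, one must check that the two hypotheses imposed on the $G_n$ are exactly what is needed to keep the resulting exponent $\lambda$ inside the admissible range of Theorem \ref{1.3}; there is no genuine obstacle beyond this.
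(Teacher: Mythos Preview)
Your proposal is correct and follows exactly the route the paper sketches in the remark preceding Theorem \ref{3.5}: bound $\prod_{v\in S}\|F_n({\bf x}_n)\|_v=\prod_{v\in S}\|G_n({\bf x}_n)\|_v$ via the coefficientwise estimates, absorb the factor $\prod_{v\in S}\|G_n\|_v$ into a small power of $H_S({\bf x}_n)$ using the growth hypothesis, and apply Theorem \ref{1.3} with an exponent $\lambda<\ell-d\Delta(\tfrac m2+1)^2$. Your write-up is in fact more careful than the paper's own sketch, in that you explicitly bound $D=\sup_n\deg G_n$ as an integer strictly below $L$ and then use the $o(\log H_S({\bf x}_n))$ hypothesis to fit the coefficient term into the $\epsilon$-margin; the paper simply asserts the existence of a suitable $c$ and $\lambda$.
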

In particular, for each nonzero $S$-integer $b$, the equation $F(x_{0}, \ldots, x_{m})=b$ (under the given assumption in Corollary \ref{1.4} for $F$) has finitely many $\mathcal{O}_{S}^{*}$-nonproportional solutions, provided $\deg  F>d\Delta\left(\frac{m}{2}+1\right)^{2}$ with $d=\max _{1 \leqslant j \leqslant q} d_{j}$.
\section*{Disclosure statement}
No potential conflict of interest was reported by the author(s).

No funding was received for this research.

\vskip0.2cm
{\footnotesize 
\noindent
{\sc Si Duc Quang}\\
$^1$Department of Mathematics, Hanoi National University of Education,
136-Xuan Thuy, Cau Giay, Hanoi, Vietnam;
$^2$Institute of Natural Sciences, Hanoi National University of Education,
136-Xuan Thuy, Cau Giay, Hanoi, Vietnam.\\
\textit{E-mail}: quangsd@hnue.edu.vn

\end{document}